\let\mathcaltmp\mathcal
\let\mathcal\mathscr
\let\mathscr\mathcaltmp
\def\thm@space@setup{\thm@preskip=7pt
\thm@postskip=7pt}
\newtheoremstyle{plain}
  {}
  {}
  {\slshape}
  {}
  {\bfseries}
  {.}
  { }
  {}
\newtheoremstyle{definition}
  {}
  {}
  {}
  {}
  {\bfseries}
  {.}
  { }
  {}
\renewenvironment{proof}[1][\proofname]{\par
  \pushQED{\qed}%
  \normalfont \topsep0\p@\relax
  \trivlist
  \item[\hskip\labelsep\itshape
  #1\@addpunct{.}]\ignorespaces
}{%
  \popQED\endtrivlist\@endpefalse
}
\newcommand{\eqnum}{\refstepcounter{equation}\textup{\tagform@{\theequation}}}
\makeatletter \@addtoreset{equation}{section} \makeatother
\renewcommand{\theequation}{\thesection.\arabic{equation}}
\newtheorem{thm}[equation]{Theorem}
\newtheorem{thmX}{Theorem}
\newtheorem{cor}[equation]{Corollary}
\newtheorem{prop}[equation]{Proposition}
\newtheorem*{defthm*}{Definition/Theorem}
\theoremstyle{definition}
\newtheorem{rem}[equation]{Remark}
\newtheorem*{exam*}{Example}
\newcommand\arXiv[1]{\href{http://arxiv.org/abs/#1}{arXiv:#1}}
\newcommand{\changelocaltocdepth}[1]{%
  \addtocontents{toc}{\protect\setcounter{tocdepth}{#1}}%
  \setcounter{tocdepth}{#1}}
\newcommand{\nc}{\newcommand}
\nc{\renc}{\renewcommand}
\nc{\ssec}{\subsection}
\nc{\sssec}{\subsubsection}
\nc{\on}{\operatorname}
\nc{\term}[1]{#1\xspace}
\tikzset{
  commutative diagrams/.cd,
  arrow style=tikz,
  diagrams={>=latex}}
\tikzset{
  column sep/.code=\def\pgfmatrixcolumnsep{\pgf@matrix@xscale*(#1)},
  row sep/.code   =\def\pgfmatrixrowsep{\pgf@matrix@yscale*(#1)},
  matrix xscale/.code=%
    \pgfmathsetmacro\pgf@matrix@xscale{\pgf@matrix@xscale*(#1)},
  matrix yscale/.code=%
    \pgfmathsetmacro\pgf@matrix@yscale{\pgf@matrix@yscale*(#1)},
  matrix scale/.style={/tikz/matrix xscale={#1},/tikz/matrix yscale={#1}}}
\def\pgf@matrix@xscale{1}
\def\pgf@matrix@yscale{1}
\setlist[enumerate,1]{label={(\alph*)},itemsep=\parskip}
\newlist{thmlist}{enumerate}{1}
\setlist[thmlist,1]{
  label={\em(\roman*)}, ref={(\roman*)},
  itemsep=0.5em,
  topsep=0em,
  leftmargin=*,
  align=left,widest=vi)}
\newlist{thmlistbis}{enumerate}{1}
\setlist[thmlistbis,1]{
  label={\em(\roman*~\textit{bis})},
  ref={(\roman*}~\textit{bis}\upshape{)},
  itemsep=0.5em,
  topsep=-0.7em,
  leftmargin=0pt, align=right, widest=vi)}
\newlist{defnlist}{enumerate}{2}
\setlist[defnlist,1]{
  label={(\roman*)}, ref={(\roman*)},
  itemsep=0.5em,
  topsep=0em,
  leftmargin=*,
  align=left, widest=vi)}
\setlist[defnlist,2]{
  label={(\alph*)}, ref={(\alph*)},
  itemsep=0.75em,
  labelsep=0em,labelindent=0em,leftmargin=*,align=left,widest=vi),
  topsep=0.75em}
\newlist{defnlistbis}{enumerate}{1}
\setlist[defnlistbis,1]{
  label={(\roman*~\textit{bis})},
  ref={(\roman*}~\textit{bis}\upshape{)},
  itemsep=0.5em,
  topsep=0em,
  leftmargin=*,
  align=left, widest=vi)}
\newlist{inlinelist}{enumerate*}{1}
\setlist[inlinelist,1]{label={(\alph*)}}
\newlist{inlinedefnlist}{enumerate*}{1}
\definecolor{green}{HTML}{38550C}
\setlist[inlinedefnlist,1]{label={\color{green}(\roman*)}}
\newlist{inlinethmlist}{enumerate*}{1}
\definecolor{green}{HTML}{38550C}
\setlist[inlinethmlist,1]{label={\color{green}(\roman*)}}
\nc{\cA}{\ensuremath{\mathcal{A}}\xspace}
\nc{\cB}{\ensuremath{\mathcal{B}}\xspace}
\nc{\cC}{\ensuremath{\mathcal{C}}\xspace}
\nc{\cD}{\ensuremath{\mathcal{D}}\xspace}
\nc{\cE}{\ensuremath{\mathcal{E}}\xspace}
\nc{\cF}{\ensuremath{\mathcal{F}}\xspace}
\nc{\cG}{\ensuremath{\mathcal{G}}\xspace}
\nc{\cH}{\ensuremath{\mathcal{H}}\xspace}
\nc{\cI}{\ensuremath{\mathcal{I}}\xspace}
\nc{\cJ}{\ensuremath{\mathcal{J}}\xspace}
\nc{\cK}{\ensuremath{\mathcal{K}}\xspace}
\nc{\cL}{\ensuremath{\mathcal{L}}\xspace}
\nc{\cM}{\ensuremath{\mathcal{M}}\xspace}
\nc{\cN}{\ensuremath{\mathcal{N}}\xspace}
\nc{\cO}{\ensuremath{\mathcal{O}}\xspace}
\nc{\cP}{\ensuremath{\mathcal{P}}\xspace}
\nc{\cQ}{\ensuremath{\mathcal{Q}}\xspace}
\nc{\cR}{\ensuremath{\mathcal{R}}\xspace}
\nc{\cS}{\ensuremath{\mathcal{S}}\xspace}
\nc{\cT}{\ensuremath{\mathcal{T}}\xspace}
\nc{\cU}{\ensuremath{\mathcal{U}}\xspace}
\nc{\cV}{\ensuremath{\mathcal{V}}\xspace}
\nc{\cW}{\ensuremath{\mathcal{W}}\xspace}
\nc{\cX}{\ensuremath{\mathcal{X}}\xspace}
\nc{\cY}{\ensuremath{\mathcal{Y}}\xspace}
\nc{\cZ}{\ensuremath{\mathcal{Z}}\xspace}
\nc{\sA}{\ensuremath{\mathscr{A}}\xspace}
\nc{\sB}{\ensuremath{\mathscr{B}}\xspace}
\nc{\sC}{\ensuremath{\mathscr{C}}\xspace}
\nc{\sD}{\ensuremath{\mathscr{D}}\xspace}
\nc{\sE}{\ensuremath{\mathscr{E}}\xspace}
\nc{\sF}{\ensuremath{\mathscr{F}}\xspace}
\nc{\sG}{\ensuremath{\mathscr{G}}\xspace}
\nc{\sH}{\ensuremath{\mathscr{H}}\xspace}
\nc{\sI}{\ensuremath{\mathscr{I}}\xspace}
\nc{\sJ}{\ensuremath{\mathscr{J}}\xspace}
\nc{\sK}{\ensuremath{\mathscr{K}}\xspace}
\nc{\sL}{\ensuremath{\mathscr{L}}\xspace}
\nc{\sM}{\ensuremath{\mathscr{M}}\xspace}
\nc{\sN}{\ensuremath{\mathscr{N}}\xspace}
\nc{\sO}{\ensuremath{\mathscr{O}}\xspace}
\nc{\sP}{\ensuremath{\mathscr{P}}\xspace}
\nc{\sQ}{\ensuremath{\mathscr{Q}}\xspace}
\nc{\sR}{\ensuremath{\mathscr{R}}\xspace}
\nc{\sS}{\ensuremath{\mathscr{S}}\xspace}
\nc{\sT}{\ensuremath{\mathscr{T}}\xspace}
\nc{\sU}{\ensuremath{\mathscr{U}}\xspace}
\nc{\sV}{\ensuremath{\mathscr{V}}\xspace}
\nc{\sW}{\ensuremath{\mathscr{W}}\xspace}
\nc{\sX}{\ensuremath{\mathscr{X}}\xspace}
\nc{\sY}{\ensuremath{\mathscr{Y}}\xspace}
\nc{\sZ}{\ensuremath{\mathscr{Z}}\xspace}
\nc{\bA}{\ensuremath{\mathbf{A}}\xspace}
\nc{\bB}{\ensuremath{\mathbf{B}}\xspace}
\nc{\bC}{\ensuremath{\mathbf{C}}\xspace}
\nc{\bD}{\ensuremath{\mathbf{D}}\xspace}
\nc{\bE}{\ensuremath{\mathbf{E}}\xspace}
\nc{\bF}{\ensuremath{\mathbf{F}}\xspace}
\nc{\bG}{\ensuremath{\mathbf{G}}\xspace}
\nc{\bH}{\ensuremath{\mathbf{H}}\xspace}
\nc{\bI}{\ensuremath{\mathbf{I}}\xspace}
\nc{\bJ}{\ensuremath{\mathbf{J}}\xspace}
\nc{\bK}{\ensuremath{\mathbf{K}}\xspace}
\nc{\bL}{\ensuremath{\mathbf{L}}\xspace}
\nc{\bM}{\ensuremath{\mathbf{M}}\xspace}
\nc{\bN}{\ensuremath{\mathbf{N}}\xspace}
\nc{\bO}{\ensuremath{\mathbf{O}}\xspace}
\nc{\bP}{\ensuremath{\mathbf{P}}\xspace}
\nc{\bQ}{\ensuremath{\mathbf{Q}}\xspace}
\nc{\bR}{\ensuremath{\mathbf{R}}\xspace}
\nc{\bS}{\ensuremath{\mathbf{S}}\xspace}
\nc{\bT}{\ensuremath{\mathbf{T}}\xspace}
\nc{\bU}{\ensuremath{\mathbf{U}}\xspace}
\nc{\bV}{\ensuremath{\mathbf{V}}\xspace}
\nc{\bW}{\ensuremath{\mathbf{W}}\xspace}
\nc{\bX}{\ensuremath{\mathbf{X}}\xspace}
\nc{\bY}{\ensuremath{\mathbf{Y}}\xspace}
\nc{\bZ}{\ensuremath{\mathbf{Z}}\xspace}
\nc{\bbA}{\ensuremath{\mathbb{A}}\xspace}
\nc{\bbB}{\ensuremath{\mathbb{B}}\xspace}
\nc{\bbC}{\ensuremath{\mathbb{C}}\xspace}
\nc{\bbD}{\ensuremath{\mathbb{D}}\xspace}
\nc{\bbE}{\ensuremath{\mathbb{E}}\xspace}
\nc{\bbF}{\ensuremath{\mathbb{F}}\xspace}
\nc{\bbG}{\ensuremath{\mathbb{G}}\xspace}
\nc{\bbH}{\ensuremath{\mathbb{H}}\xspace}
\nc{\bbI}{\ensuremath{\mathbb{I}}\xspace}
\nc{\bbJ}{\ensuremath{\mathbb{J}}\xspace}
\nc{\bbK}{\ensuremath{\mathbb{K}}\xspace}
\nc{\bbL}{\ensuremath{\mathbb{L}}\xspace}
\nc{\bbM}{\ensuremath{\mathbb{M}}\xspace}
\nc{\bbN}{\ensuremath{\mathbb{N}}\xspace}
\nc{\bbO}{\ensuremath{\mathbb{O}}\xspace}
\nc{\bbP}{\ensuremath{\mathbb{P}}\xspace}
\nc{\bbQ}{\ensuremath{\mathbb{Q}}\xspace}
\nc{\bbR}{\ensuremath{\mathbb{R}}\xspace}
\nc{\bbS}{\ensuremath{\mathbb{S}}\xspace}
\nc{\bbT}{\ensuremath{\mathbb{T}}\xspace}
\nc{\bbU}{\ensuremath{\mathbb{U}}\xspace}
\nc{\bbV}{\ensuremath{\mathbb{V}}\xspace}
\nc{\bbW}{\ensuremath{\mathbb{W}}\xspace}
\nc{\bbX}{\ensuremath{\mathbb{X}}\xspace}
\nc{\bbY}{\ensuremath{\mathbb{Y}}\xspace}
\nc{\bbZ}{\ensuremath{\mathbb{Z}}\xspace}
\nc{\mrm}[1]{\ensuremath{\mathrm{#1}}\xspace}
\nc{\mit}[1]{\ensuremath{\mathit{#1}}\xspace}
\nc{\mbf}[1]{\ensuremath{\mathbf{#1}}\xspace}
\nc{\mcal}[1]{\ensuremath{\mathcal{#1}}\xspace}
\nc{\msc}[1]{\ensuremath{\mathscr{#1}}\xspace}
\nc{\mfr}[1]{\ensuremath{\mathfrak{#1}}\xspace}
\nc{\sub}{\subseteq}
\nc{\too}{\longrightarrow}
\nc{\hook}{\hookrightarrow}
\nc{\hooklongrightarrow}{\lhook\joinrel\longrightarrow}
\nc{\hooklong}{\hooklongrightarrow}
\nc{\hooklongleftarrow}{\longleftarrow\joinrel\rhook}
\nc{\twoheadlongrightarrow}{\relbar\joinrel\twoheadrightarrow}
\nc{\longrightleftarrows}{\ \raisebox{0.3ex}{\(\mathrel{\substack{\xrightarrow{\rule{1em}{0em}} \\[-1ex] \xleftarrow{\rule{1em}{0em}}}}\)}\ }
\renc{\ge}{\geqslant}
\renc{\le}{\leqslant}
\nc{\id}{\mathrm{id}}
\DeclareMathOperator{\Hom}{\on{Hom}}
\nc{\uHom}{\underline{\smash{\Hom}}}
\DeclareMathOperator{\End}{\on{End}}
\nc{\uEnd}{\underline{\smash{\End}}}
\nc{\colim}{\varinjlim}
\renc{\lim}{\varprojlim}
\nc{\Cofib}{\on{Cofib}}
\nc{\Fib}{\on{Fib}}
\nc{\initial}{\varnothing}
\nc{\op}{\mathrm{op}}
\DeclareMathOperator*{\fibprod}{\times}
\renc{\setminus}{\smallsetminus}
\newcommand{\thmref}[1]{Theorem~\ref{#1}}
\newcommand{\propref}[1]{Proposition~\ref{#1}}
\newcommand{\corref}[1]{Corollary~\ref{#1}}
\renewcommand{\eqref}[1]{(\ref{#1})}
\newcommand{\itemref}[1]{\ref{#1}}
\nc{\sing}{\mrm{sing}}
\nc{\A}{\bA}
\renc{\P}{\bP}
\nc{\V}{\bV}
\nc{\Spec}{\on{Spec}}
\nc{\D}{\on{\mbf{D}}}
\nc{\rD}{\on{\mrm{D}}}
\nc{\Dqc}{\on{\mbf{D}}_{\mrm{qc}}}
\nc{\bDelta}{\mathbf{\Delta}}
\nc{\Cech}{\textnormal{\v{C}}}
\nc{\Dperf}{\on{\mbf{D}}_{\mrm{perf}}}
\nc{\Perf}{\on{Perf}}
\nc{\Coh}{\on{Coh}}
\nc{\Qcoh}{\on{Qcoh}}
\nc{\DCoh}{\on{DCoh}}
\nc{\cl}{{\mrm{cl}}}
\nc{\Bl}{\on{Bl}}
\nc{\vir}{\mrm{vir}}
\nc{\CH}{\on{CH}}
\nc{\Zar}{\mrm{Zar}}
\nc{\et}{\mrm{\acute{e}t}}
\nc{\Nis}{\mrm{Nis}}
\renc{\H}{\on{H}}
\nc{\BM}{\mrm{BM}}
\nc{\Z}{\bZ}
\nc{\Q}{\bQ}
\nc{\K}{{\on{K}}}
\nc{\KB}{\K^{\mrm{B}}}
\nc{\Ktop}{\K^{\mrm{top}}}
\nc{\G}{{\on{G}}}
\nc{\KH}{{\on{KH}}}
\nc{\HP}{\on{HP}}
\nc{\Einfty}{{\sE_\infty}}
\renc{\sp}{\mrm{sp}}
\nc{\Td}{\on{Td}}
\nc{\ch}{\on{ch}}
\nc{\RGamma}{R\Gamma}
\nc{\red}{\mrm{red}}
\nc{\der}{{\mrm{der}}}
\nc{\Mod}{{\mrm{Mod}}}
\nc{\Gr}{{\on{Gr}}}
\nc{\Ind}{\on{Ind}}
\nc{\Pro}{\on{Pro}}
\nc{\dash}{{\textnormal{-}}}
\nc{\InftyCat}{\infty\dash\mrm{Cat}}
\nc{\Pres}{\mrm{Pres}}
\nc{\form}{\widehat}
\nc{\R}{\bR}
\renc{\L}{\bL}
\nc{\otimesL}{\mathchoice{\overset{\bL}{\otimes}}{\otimes^\bL}{\otimes^\bL}{\otimes^\bL}}
\nc{\fibprodR}{\fibprod^\bR}
\nc{\uRHom}{\bR\uHom}
\nc{\GL}{\mrm{GL}}
\nc{\SW}{\on{SW}}
\nc{\Vect}{\on{Vect}}
\nc{\Fun}{\on{Fun}}
\nc{\Nat}{\on{Nat}}
\nc{\un}{\mbf{1}}
\nc{\pr}{\mrm{pr}}
\nc{\pt}{\mrm{pt}}
\nc{\vb}[1]{\langle{#1}\rangle}
\nc{\Pt}{\on{Pt}}
\nc{\lisse}{{\triangleleft}}
\nc{\Lis}{\mrm{Lis}}
\nc{\LisStk}{\mrm{LisStk}}
\nc{\Et}{{\mrm{Et}}}
\nc{\aff}{\mrm{aff}}
\nc{\qproj}{\mrm{qproj}}
\nc{\fp}{\mrm{fp}}
\nc{\ft}{\mrm{ft}}
\nc{\affft}{\mrm{affft}}
\nc{\sm}{\mrm{sm}}
\nc{\lci}{\mrm{lci}}
\nc{\Lisftsm}{\Lis^{\ft:\sm}}
\nc{\Lisaffftsm}{\Lis^{\affft:\sm}}
\nc{\Lisaspftsm}{\Lis^{\mrm{aspft}:\sm}}
\renc{\top}{\mrm{top}}
\nc{\C}{\on{C}}
\nc{\Chom}{\mrm{C}_\bullet}
\nc{\Ccoh}{\mrm{C}^\bullet}
\nc{\Ccohc}{\mrm{C}_{\mrm{c}}^\bullet}
\nc{\CBM}{\mrm{C}^{\BM}_\bullet}
\nc{\mot}{\mrm{mot}}
\nc{\Chommot}{\mrm{C}^{\mot}_\bullet}
\nc{\Top}{\mrm{Top}}
\renc{\top}{\mrm{top}}
\nc{\Spc}{\mrm{Spc}}
\nc{\Stk}{\mrm{Stk}}
\nc{\Art}{\mrm{Art}}
\nc{\Shv}{\on{Shv}}
\nc{\Spt}{\mrm{Spt}}
\nc{\heart}{\heartsuit}
\nc{\an}{\mrm{an}}
\nc{\Anima}{\mrm{Ani}}
\nc{\Aff}{\mrm{Aff}}
\nc{\MotSpc}{{\mrm{MAni}}}
\nc{\SH}{\on{\mathbf{SH}}}
\renc{\L}{\mrm{\bL}}
\nc{\h}{\mrm{h}}
\nc{\Sm}{\mrm{Sm}}
\nc{\Sch}{\mrm{Sch}}
\nc{\Asp}{\mrm{Asp}}
\nc{\Betti}{\mrm{Bet}}
\nc{\cdh}{\mrm{cdh}}
\renc{\Re}{\mrm{Re}}
\nc{\bz}{\mathbf{z}}
\nc{\Tot}{\on{Tot}}
\nc{\MGL}{\mrm{MGL}}
\nc{\modmod}{/\!\!/}
\nc{\dR}{\mrm{dR}}
\nc{\laur}{(\!\!(u)\!\!)}
\nc{\IndCoh}{\on{IndCoh}}
\nc{\scr}{\term{derived commutative ring}}
\nc{\scrs}{\term{derived commutative rings}}
\nc{\inftyCat}{\term{$\infty$-category}}
\nc{\inftyCats}{\term{$\infty$-categories}}
\nc{\inftyGrpd}{\term{$\infty$-groupoid}}
\nc{\inftyGrpds}{\term{$\infty$-groupoids}}
\nc{\dA}{\term{derived Artin}}
\title{The lattice property for perfect complexes on singular stacks\vspace{-2mm}}
\author[A.\,A. Khan]{Adeel A. Khan}
\date{2023-08-03}
\def\l@subsection{\@tocline{2}{0pt}{4pc}{6pc}{}}
\begin{document}

\begin{abstract}
  Let $\sC$ be the stable \inftyCat of perfect complexes on a derived Deligne--Mumford stack $\sX$ of finite type over the complex numbers.
  We prove that the complexified noncommutative topological Chern character $\Ktop(\sC) \otimes \bC \to \HP(\sC)$ is invertible.
  In the appendix we show the same property for $\sC$ the stable \inftyCat of coherent complexes on a derived algebraic space.
  \vspace{-5mm}
\end{abstract}

\maketitle


\setlength{\parindent}{0em}
\parskip 0.6em

\thispagestyle{empty}


\changelocaltocdepth{1}

Let $\sC$ be a $\bC$-linear stable \inftyCat (or pretriangulated dg-category).
The topological K-theory of $\sC$ in the sense of Blanc \cite{Blanc} is a spectrum $\Ktop(\sC)$ which admits a canonical map to the periodic cyclic homology spectrum
\begin{equation*}
  \Ktop(\sC) \to \HP(\sC)
\end{equation*}
that may be regarded as a ``noncommutative'' analogue of the Chern character.
Let us say that $\sC$ satisfies the \emph{lattice property} when the induced map $\Ktop(\sC) \otimes \bC \to \HP(\sC)$ is invertible.

The \emph{lattice conjecture}, motivated by considerations in noncommutative Hodge theory, is the assertion that any \emph{smooth} and \emph{proper} $\sC$ satisfies the lattice property (see \cite[2.2.6(b)]{KatzarkovKontsevichPantev}, \cite[Conj.~1.7]{Blanc}).
For $\sC$ the stable \inftyCat of perfect complexes on a scheme or stack, smoothness and properness do not appear to be relevant.
For example, the lattice property is known for the stable \inftyCat $\Perf(X)$ of perfect complexes on any quasi-separated derived algebraic space $X$ of finite type over $\bC$ (see \cite[Prop.~4.32]{Blanc}, \cite[Cor.~6.8]{Konovalov}).
Halpern--Leistner and Pomerleano extended this to \emph{smooth} Deligne--Mumford stacks as well as certain \emph{smooth} global quotient stacks (see \cite[Thm.~2.17, Cor.~2.19]{HalpernLeistnerPomerleano}).
In this paper we consider the singular case:

\begin{thmX}\label{thm:perf}
  For a derived stack $\sX$ of finite type over $\bC$, the $\bC$-linear stable \inftyCat $\Perf(\sX)$ satisfies the lattice property in the following cases:
  \begin{thmlist}
    \item\label{item:perf/dm}
    $\sX$ is Deligne--Mumford with separated diagonal.

    \item\label{item:perf/quot}
    $\sX = [X/G]$ where $X$ is a quasi-separated derived algebraic space of finite type and $G$ is an affine algebraic group with diagonalizable identity component.
  \end{thmlist}
\end{thmX}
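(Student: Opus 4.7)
The strategy is to reduce both parts of the theorem to the lattice property for quasi-separated derived algebraic spaces of finite type, which is already known by the results of Blanc and Konovalov cited in the introduction. Write
\[
  L(\sC) := \Fib\bigl(\Ktop(\sC) \otimes \bC \to \HP(\sC)\bigr)
\]
for the \emph{lattice defect} of a $\bC$-linear stable \inftyCat $\sC$; the lattice property is the assertion $L(\sC) \simeq 0$.

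For part~\ref{item:perf/dm}, I would choose an étale atlas $p\colon U \to \sX$ by a quasi-separated derived algebraic space $U$ of finite type, which exists since $\sX$ is a finite-type derived DM stack. The separated diagonal hypothesis guarantees that every term of the Čech nerve $U^{\times_{\sX}(\bullet+1)}$ is again a quasi-separated derived algebraic space of finite type, so Blanc--Konovalov gives $L(\Perf(U^{\times_{\sX}(n+1)})) \simeq 0$ for every $n \ge 0$. The remaining step is to establish that the presheaf of spectra $\sY \mapsto L(\Perf(\sY))$ satisfies étale hyperdescent on finite-type derived DM stacks; granted this, taking the limit over the Čech nerve yields $L(\Perf(\sX)) \simeq 0$.

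For part~\ref{item:perf/quot}, I would first reduce to the connected case along the finite étale morphism $[X/G^0] \to [X/G]$ with Galois group $\pi_0(G)$: part~\ref{item:perf/dm} handles the DM quotient of $[X/G^0]$ by this finite group, providing the reduction once suitable descent along finite étale maps is established. With $G = T$ a torus, I would analyze $\Perf([X/T])$ via its structure as a sheaf of categories over $BT$. The representation theory of $T$ then furnishes a grading of both $\Ktop \otimes \bC$ and $\HP$ of $[X/T]$ by the character lattice $X^{*}(T)$, through which the lattice property reduces to that of $\Perf(X)$ itself.

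The principal obstacle is the descent of $L \circ \Perf$ in part~\ref{item:perf/dm}: while $\Perf$ is itself an étale sheaf of \inftyCats, neither $\Ktop$ nor $\HP$ is a priori étale-local on $\Perf$. Likely one must establish descent directly for the defect $L \circ \Perf$ using its known vanishing on the generating family of algebraic spaces, or invoke refined descent for $\HP$ via HKR and for $\Ktop$ via its construction as a Bott-inverted topological realization. In part~\ref{item:perf/quot}, the subtlety is instead controlling the completion phenomena that enter $\HP(BT)$ and matching them on the $\Ktop$-side; this is precisely where the diagonalizability of $G^0$ is indispensable.
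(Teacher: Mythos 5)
Your reduction scheme stalls at exactly the points you yourself flag as obstacles, and both are fatal rather than technical. For part \itemref{item:perf/dm}, the presheaf $\sY \mapsto L(\Perf(\sY))$ does \emph{not} satisfy descent along the \v{C}ech nerve of an étale atlas $U \to \sX$: although $\Perf(\sX) \simeq \lim_n \Perf(U^{\times_\sX(n+1)})$ as \inftyCats, localizing invariants do not commute with such totalizations, and for $\Ktop \otimes \bC$ the failure is precisely the Atiyah--Segal completion phenomenon (already for $\sX = BG$ with $G$ a nontrivial finite group, $\Ktop(\Perf(BG)) \otimes \bC$ has rank equal to the number of conjugacy classes of $G$ in each even degree, while the descent of $\Ktop(\Perf(\pt)) \otimes \bC$ along $\pt \to BG$ has rank one). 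Showing that the completion defects on the $\Ktop$- and $\HP$-sides cancel in the fibre $L$ is essentially the content of the theorem, so this route is circular. The paper uses only \emph{representable} étale (Nisnevich) descent, via scallop decompositions of $\sX$ into quotient stacks $[X/G]$ with $G$ nice and $X$ (quasi-)affine; descent for such squares holds for every localizing invariant of $\Perf([-/G])$ and involves no completion. The same objection defeats your reduction of \itemref{item:perf/quot} to connected $G$ along the $\pi_0(G)$-cover $[X/G^0]\to[X/G]$; moreover the proposed $X^*(T)$-grading of $\Perf([X/T])$ exists only when $T$ acts trivially on $X$ (e.g.\ $\Perf([\bbA^1/\bbG_m])$ is not a direct sum of copies of $\Perf(\bbA^1)$ indexed by characters), so part \itemref{item:perf/quot} does not reduce to $\Perf(X)$ in this way.

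Separately, your proposal never engages with the singularities of $\sX$, which are the actual point of the theorem (the smooth DM and smooth quotient cases are already in Halpern-Leistner--Pomerleano). The paper's new ingredient is that $F = L$ is a \emph{truncating} invariant (by Konovalov), hence $F(\Perf([-/G]))$ satisfies nil-invariance and equivariant cdh hyperdescent (\corref{cor:cdh}); combined with $G$-equivariant resolution of singularities this reduces everything to $X$ smooth and affine, where one quotes \cite[Thm.~2.17]{HalpernLeistnerPomerleano} after the induction equivalence $[X/G]\simeq[(X\fibprod^G_S\GL_{n,S})/\GL_{n,S}]$ to replace $G$ by a group defined over $\bC$. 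Any correct proof needs a substitute for this cdh/resolution step: atlas descent alone cannot supply it, since the vanishing on algebraic spaces you invoke is itself a cdh-descent statement and does not propagate to quotient stacks by taking homotopy limits over a \v{C}ech nerve.
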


In fact we prove the result more generally for derived Artin stacks $\sX$ with separated diagonal whose stabilizers are \emph{nice} algebraic groups in the sense of \cite[Def.~1.1]{HallRydhGroups}.
The main new tool is an equivariant cdh descent result for truncating invariants of stable \inftyCats.
Case~\itemref{item:perf/quot} of \thmref{thm:perf} was conjectured by Halpern--Leistner and Pomerleano (without the diagonalizability hypothesis).

In the appendix we record a proof of the following result.
We write $\DCoh(X)$ for the stable \inftyCat of coherent (= pseudocoherent with bounded cohomology) complexes on a derived algebraic space $X$.

\begin{thmX}\label{thm:coh}
  Let $X$ be a quasi-separated derived algebraic space of finite type over $\bC$.
  The $\bC$-linear stable \inftyCat $\DCoh(X)$ satisfies the lattice property.
\end{thmX}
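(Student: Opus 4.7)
The plan is to reduce the lattice property for $\DCoh(X)$ to the known case of $\Perf$ on a regular algebraic space, via Noetherian induction on the support using d\'evissage along closed immersions. Throughout, I use that both $\Ktop \otimes \bC$ and $\HP$ are localizing invariants of small stable $\bC$-linear \inftyCats, and that the noncommutative Chern character is a natural transformation of such invariants.

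First, I would reduce to the classical case. Goodwillie's theorem gives nilpotent invariance of $\HP$ on $\DCoh$, and granting the analogous property for $\Ktop \otimes \bC$ (the key technical input, discussed below), one may replace $X$ by its classical truncation and assume $X$ is a Noetherian, quasi-separated classical algebraic space of finite type over $\bC$.

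Next, I would induct on $\dim X_\red$. Let $U \subseteq X$ be the open dense subscheme on which $X_\red$ is smooth, with closed complement $Z$. The Verdier sequence of stable \inftyCats
$$\DCoh_Z(X) \to \DCoh(X) \to \DCoh(U)$$
is sent by both $\Ktop \otimes \bC$ and $\HP$ to fibre sequences, so by the five lemma it suffices to verify the lattice property for the outer terms. On $U$, regularity gives $\DCoh(U) = \Perf(U)$, and the lattice property is supplied by \cite[Prop.~4.32]{Blanc} and \cite[Cor.~6.8]{Konovalov}. For $\DCoh_Z(X)$, every object is annihilated by some power of the ideal sheaf of $Z$, yielding a filtered colimit presentation $\DCoh_Z(X) = \colim_n \DCoh(Z_n)$ by the successive infinitesimal neighbourhoods of $Z_\red$. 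By nilpotent invariance and commutation with filtered colimits of stable \inftyCats, both invariants agree with their values on $\DCoh(Z_\red)$, and the inductive hypothesis applies since $\dim Z_\red < \dim X_\red$.

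The main obstacle is establishing the nilpotent invariance of $\Ktop \otimes \bC$ on $\DCoh$. Blanc's $\Ktop$ is built by topologising algebraic $\K$-theory, and truncating properties fail for $\K$ itself on $\DCoh$, so after $\otimes\, \bC$ a further argument is required. The natural route is to exhibit $\Ktop \otimes \bC$ as a truncating invariant in the sense of Land--Tamme, from which nil-invariance and cdh descent follow formally; this is the step at which one expects to invoke the methods developed in the body of the paper, specialised to the non-equivariant setting of derived algebraic spaces. A secondary technical point is verifying that $\Ktop$ and $\HP$ commute with the filtered colimit $\colim_n \DCoh(Z_n)$, which should follow from their definitions via Kan extensions from finitely presented inputs.
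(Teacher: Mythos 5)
Your reduction skeleton (localize, induct on dimension, split off the regular open locus via the Verdier sequence, and absorb the complement through its infinitesimal neighbourhoods) is a legitimate alternative to the paper's route, which instead Nisnevich-localizes to the affine case, chooses a closed embedding $X \hook Y$ into a smooth affine scheme, and concludes from the localization sequence $F^\BM(X) \to F^\BM(Y) \to F^\BM(Y\setminus X)$ with the outer two terms vanishing by Blanc. But both routes stand or fall on the same hard statement, and your proposal assumes it rather than proving it: \emph{dévissage for $\HP$ on $\DCoh$}, i.e.\ that for a closed immersion $i : Z \hook X$ the pushforward $\HP^\BM(Z) \to \HP(\DCoh(X~\mrm{on}~Z))$ is an equivalence — equivalently, in the special case of a nil-immersion, that $i_* : \HP^\BM(X_\red) \to \HP^\BM(X)$ is invertible. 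You attribute this to Goodwillie's theorem, but Goodwillie's theorem concerns $\HP$ of nilpotent extensions of rings, i.e.\ invariance of $\HP(\Perf(-))$ under \emph{pullback}; the statement you need is about $\DCoh$ under \emph{pushforward}, which is a genuinely different assertion. It is false for arbitrary localizing invariants — the paper cites Keller's counterexample showing dévissage fails for $\on{HH}$ — so some special feature of $\HP$ must enter. In the paper this is Theorem~\ref{thm:dev}, proved via Preygel's identification of $\HP^\BM(X)$ with two-periodized de Rham Borel--Moore chains $\Chom^{\BM,\dR}(X) \otimes_k k\laur$ together with the Kashiwara localization triangle for $D$-modules; the accompanying remark explicitly states that even the nil-invariance special case is not known without going through this comparison. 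Your step ``by nilpotent invariance \ldots both invariants agree with their values on $\DCoh(Z_\red)$'' and your initial reduction to the classical/reduced case both silently invoke exactly this.

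Two smaller points. First, you misidentify the main obstacle: nil-invariance of $\Ktop \otimes \bC$ on $\DCoh$ is the \emph{easy} half, since $\K(\DCoh(-)) = \G(-)$ satisfies Quillen dévissage and this is inherited by Blanc's $\Ktop$; and the ``truncating invariant'' technology of Land--Tamme is a statement about $\Perf$ of (derived) rings, so exhibiting $\Ktop \otimes \bC$ as truncating (which is how the body of the paper handles Theorem~\ref{thm:perf}) would not address the $\DCoh$-side issue anyway. Second, the presentation $\DCoh_Z(X) \simeq \colim_n \DCoh(Z_n)$ along pushforwards and the commutation of the invariants with this colimit are true but need a citation; note also that if you run the induction on dimension for \emph{all} finite type spaces of smaller dimension, you can apply the inductive hypothesis to each (non-reduced) $Z_n$ directly and avoid nil-invariance at that step — but you cannot avoid it in the initial reduction from $X$ to $X_\red$, since a generically non-reduced $X$ has empty regular locus and the dimension induction makes no progress there.
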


Recently, Brown and Walker \cite{BrownWalker} proved this for $X$ a local complete intersection scheme using a dévissage result for $\HP$.
Our main ingredient is a stronger dévissage result that follows from work of Preygel \cite{Preygel}.

\ssec*{Acknowledgments}

I would like to thank Harrison Chen, Benjamin Gammage, Daniel Halpern-Leistner, Mark E. Walker, and Charanya Ravi for helpful discussions and comments on previous drafts.
I was supported by the grants NSTC 110-2115-M-001-016-MY3 and AS-CDA-112-M01 (Academia Sinica).

\section{Cdh descent}

  Let $G$ be an fppf group scheme over an affine scheme $S$, which we assume noetherian and of finite Krull dimension for simplicity.
  We will assume that $G$ is \emph{embeddable}, i.e., can be embedded as a closed subgroup of $\GL_{n,S}$ for some $n$, and that $G$ is \emph{nice}, i.e., an extension of a tame finite étale group scheme by a group scheme of multiplicative type (see \cite[Def.~2.1]{AlperHallRydh}).
  In particular, $G$ is linearly reductive.

  Let $E$ be a localizing invariant of $\sO_S$-linear stable \inftyCats\footnote{%
     with values in spectra, say, or any stable \inftyCat with an exact conservative functor to spectra
  }.
  On the \inftyCat $\sS^G$ of quasi-separated derived algebraic spaces of finite type over $S$ with $G$-action, the presheaf
  $$E^G(-) := E(\Perf([-/G])$$
  satisfies Nisnevich descent by \cite[Thm.~1.40, Rem.~2.15]{kstack}.
  By the generalized Sumihiro theorem (see \cite[Thm.~2.14(ii)]{sixstack}, \cite[Prop.~A.1.9]{BKRSMilnor}), every $X \in \sS^G$ admits a $G$-equivariant scallop decomposition by quasi-affines.
  If $X \in \sS^G$ is quasi-affine it moreover admits a $G$-equivariant scallop decomposition by affines (see \cite[Prop.~A.1.9]{BKRSMilnor}).
  We will use these observations repeatedly in combination with Nisnevich descent to reduce statements about $X \in \sS^G$ to the affine case.

  An \emph{abstract blow-up square} in $\sS^G$ is a commutative square
  \begin{equation}\label{eq:mustafina}
    \begin{tikzcd}
      Z' \ar{r}\ar{d}
      & X' \ar{d}{f}
      \\
      Z \ar{r}{i}
      & X
    \end{tikzcd}
  \end{equation}
  which is cartesian on classical truncations, where $i$ is a closed immersion and $f$ is a proper morphism inducing an isomorphism $X'\setminus f^{-1}(Z) \simeq X\setminus Z$.
  The \emph{cdh} topology on $\sS^G$ is generated by Nisnevich covers and, for every abstract blow-up square as above, the cover $Z \coprod X' \to X$.
  The following cdh descent criterion is from \cite[Thm.~5.6]{kblow}:

  \begin{thm}
    The presheaf $X \mapsto E^G(X)$ satisfies cdh hyperdescent on $\sS^G$ if and only if it satisfies nil-invariance: for every $X \in \sS^G$ and every surjective closed immersion $i : Z \to X$, the induced map
    \[ i^* : E^G(X) \to E^G(Z) \]
    is invertible.
  \end{thm}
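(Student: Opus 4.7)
The plan is to handle the two directions separately, with the ``if'' direction being the substantial one.

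For the ``only if'' direction, I would apply cdh descent to the abstract blow-up square
\[
\begin{tikzcd}
\initial \ar{r}\ar{d} & \initial \ar{d} \\
Z \ar{r}{i} & X
\end{tikzcd}
\]
which is a valid abstract blow-up square whenever $i$ is a surjective closed immersion (both $X \setminus Z$ and $X' \setminus f^{-1}(Z)$ being empty). Cdh descent applied to this square directly yields $E^G(X) \xrightarrow{\sim} E^G(Z)$.

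For the ``if'' direction, Nisnevich descent is already in hand, so it remains to show that $E^G$ carries every abstract blow-up square \eqref{eq:mustafina} to a cartesian square of spectra. First I would invoke equivariant Nisnevich descent together with the $G$-equivariant scallop decompositions (by quasi-affines, and then by affines in the quasi-affine case) recalled above, reducing to the case where $X$ is $G$-equivariantly affine over $S$. Next I would replace the classical blow-up $X'$ by the $G$-equivariant derived blow-up $\Bl^{\der}_Z(X)$: the comparison map $\Bl^{\der}_Z(X) \to X'$ is a nilpotent closed immersion on classical truncations, so nil-invariance of $E^G$ permits us to replace the given abstract blow-up square with the derived blow-up square without changing its image under $E^G$. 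Finally, the exceptional divisor of the derived blow-up is a projective bundle over $Z$, and a $G$-equivariant version of Orlov's semi-orthogonal decomposition presents $\Perf(\Bl^{\der}_Z(X))$ as an extension of pieces controlled by $\Perf(X)$ and $\Perf(Z)$. Applying the localizing invariant $E$ then produces the desired cartesian square.

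The main obstacle is the last two steps, which require a workable $G$-equivariant theory of derived blow-ups and a verification that Orlov's semi-orthogonal decomposition descends in an $E$-linear manner compatible with the $G$-action. All of this is carried out in \cite[Thm.~5.6]{kblow}, whose equivariant incarnation I would ultimately invoke to complete the argument.
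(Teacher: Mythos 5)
Your ``only if'' direction coincides with the paper's: the square with $X' = Z' = \initial$ is a legitimate abstract blow-up square precisely because $i$ is surjective, and cdh descent for it yields nil-invariance.

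The ``if'' direction has a genuine gap, concentrated in the step where you trade a general abstract blow-up square for a derived blow-up. First, in an abstract blow-up square \eqref{eq:mustafina} the morphism $f$ is an arbitrary proper map that is an isomorphism away from $Z$ --- a resolution of singularities or a normalization, say --- so $X'$ need not be the blow-up of $X$ in $Z$ and there is no comparison map $\Bl^{\der}_Z(X) \to X'$ to speak of; reducing such squares to honest blow-up squares requires a Raynaud--Gruson flattening argument, which here would moreover have to be $G$-equivariant. Second, even when $X'$ \emph{is} the classical blow-up, the classical truncation of the derived blow-up (which in any case requires choosing a quasi-smooth derived structure on $Z$, available only locally) contains the classical blow-up as a closed subscheme that is in general \emph{not} a nilpotent thickening: the derived blow-up acquires extra components over the non-regularly-embedded locus of $Z$ (already for the derived blow-up of a point presented by more equations than its codimension), so nil-invariance does not let you substitute one for the other. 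Finally, your closing appeal to ``the equivariant incarnation of \cite[Thm.~5.6]{kblow}'' is circular, since that equivariant incarnation is exactly the statement being proved. The paper sidesteps the whole blow-up analysis: it uses nil-invariance to reduce to classical $X$, quotes the already-equivariant \emph{pro}-excision theorem of \cite[Thm.~C, Rem.~0.0.8]{BKRSMilnor} for arbitrary abstract blow-up squares, and then applies nil-invariance once more to the pro-system of infinitesimal thickenings of $Z$ and $Z'$ to collapse pro-excision to ordinary excision. If you want to salvage your route you must actually carry out the equivariant flattening and the equivariant semi-orthogonal decomposition; the pro-excision route is what makes the proof a four-line deduction.
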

  \begin{proof}
    For any $X \in \sS^G$ the inclusion $\initial \to X$ is a proper morphism inducing an isomorphism over $X\setminus Z = \initial$, so the condition is necessary.
    For the other direction, we first apply nil-invariance for the inclusion of the classical truncation to restrict our attention to abstract blow-up squares over $X \in \sS^G$ classical.
    By \cite[Thm.~C, Rem.~0.0.8]{BKRSMilnor}, we have \emph{pro}-excision for any such blow-up square.
    By nil-invariance applied to infinitesimal thickening, this reduces to ordinary excision.
  \end{proof}

  We deduce a $G$-equivariant version of \cite[Thm.~E]{LandTamme}.

  \begin{cor}\label{cor:cdh}
    If $E$ is a truncating invariant in the sense of \cite[Def.~3.1]{LandTamme}, then $E^G(-)$ satisfies cdh hyperdescent on $\sS^G$.
  \end{cor}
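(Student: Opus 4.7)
The plan is to apply the preceding theorem, which reduces cdh hyperdescent of $E^G$ to nil-invariance: for every surjective closed immersion $i : Z \to X$ in $\sS^G$, the map $i^* : E^G(X) \to E^G(Z)$ must be invertible. First I would use the Nisnevich descent already established for $E^G$, together with the $G$-equivariant scallop decomposition by quasi-affines from generalized Sumihiro and then the further decomposition by affines on quasi-affine $G$-spaces, to reduce to the case where $X = \Spec(A)$ is affine over $S$ (so that $Z = \Spec(B)$ is automatically affine as well).

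In this affine case, $A \to B$ is a $G$-equivariant surjection of derived $\sO_S$-algebras whose kernel is nilpotent on $\pi_0$, and I need to show that the restriction $\Perf([\Spec(A)/G]) \to \Perf([\Spec(B)/G])$ becomes an equivalence after applying the truncating invariant $E$. The plan here is to use niceness of $G$ to identify $\Perf([\Spec(A)/G])$ with $\Perf$ of a concrete $\sO_S$-linear derived algebra — for example a crossed product of $A$ with the Hopf algebra $\sO(G)$ — in such a way that the nilpotent surjection $A \to B$ produces a nilpotent surjection between the associated algebras. The non-equivariant Land--Tamme theorem \cite[Thm.~E]{LandTamme} then applies to finish.

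The main obstacle will be making this last identification rigorous in the derived setting and verifying its compatibility with nilpotent surjections; this is where niceness (a tame finite \'etale extension of a group of multiplicative type) is essential, since it ensures that $G$-representations split off cleanly and that no higher cohomological corrections appear. A cleaner alternative, which avoids exhibiting an explicit algebra presentation, would be to invoke the abstract version of Land--Tamme's criterion — that any $\sO_S$-linear functor of stable \inftyCats with a suitably ``nilpotent'' fiber induces an equivalence on truncating invariants — and to verify its hypotheses directly for $\Perf([X/G]) \to \Perf([Z/G])$, again reducing via niceness of $G$ to the non-equivariant statement.
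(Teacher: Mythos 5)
You correctly reduce, via the preceding theorem, to proving nil-invariance of $E^G$, and the further reduction to $X$ affine via Nisnevich descent and equivariant scallop decompositions is legitimate (though, as it turns out, not needed). The problem is with your main plan for the affine case. For a nice group scheme $G$ that is not finite --- e.g.\ $G = \bG_m$ or any nontrivial group of multiplicative type --- the \inftyCat $\Perf([\Spec(A)/G])$ is \emph{not} equivalent to perfect modules over a crossed product $A \rtimes \sO(G)$, nor over any single associative algebra: already $\Qcoh(B\bG_m)$ is the category of graded modules, which is compactly generated by the infinite family of characters but admits no single compact generator. So there is no "concrete $\sO_S$-linear derived algebra" to which the non-equivariant nil-invariance statement for rings could be applied, and this route breaks down precisely in the multiplicative-type case that niceness is designed to allow. (A smaller misattribution: \cite[Thm.~E]{LandTamme} is the cdh descent theorem itself; nil-invariance of truncating invariants for discrete rings is a separate result of that paper.)

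Your fallback --- an abstract criterion saying that truncating invariants invert functors of stable \inftyCats with suitably nilpotent fiber, verified for $\Perf([X/G]) \to \Perf([Z/G])$ --- is in fact exactly what the paper does, and it is the entire content of the proof: the paper simply cites \cite{ElmantoSosnilo} (Thm.~1.0.4 together with Prop.~5.1.10), where truncating invariants are shown to invert nilpotent extensions of stable \inftyCats and surjective closed immersions of such quotient stacks are shown to induce nilpotent extensions on $\Perf$. As written, your proposal leaves this categorical nil-invariance theorem unproved; it is the main theorem of the Elmanto--Sosnilo paper and not something one can "verify directly" in a few lines. So the architecture of your argument is right, but the step carrying all the weight is either aimed at a nonexistent algebra presentation or deferred to a theorem you would still need to locate or prove.
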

  \begin{proof}
    Since $E$ is truncating, we have nil-invariance by \cite{ElmantoSosnilo} (combining Thm.~1.0.4 and Prop.~5.1.10).
  \end{proof}

\section{Proof of \thmref{thm:perf}}

  Denote by $F(-)$ the fibre of the natural transformation $\Ktop(-) \otimes \bC \to \HP(-)$.
  This is a localizing invariant of $\bC$-linear stable \inftyCats, which is truncating by (the proof of) \cite[Cor.~5.6]{Konovalov}.
  We will prove the following, which generalizes both cases of \thmref{thm:perf}:

  \begin{thm}
    Let $\sX$ be a derived algebraic stack of finite type over $\bC$ with separated diagonal and nice stabilizers.
    Then $\Perf(\sX)$ satisfies the lattice property, i.e., $F(\sX) \simeq 0$.
  \end{thm}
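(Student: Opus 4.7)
The plan is to use the equivariant cdh descent of \corref{cor:cdh} to reduce the vanishing $F(\sX) \simeq 0$ to the smooth quotient case, where the theorem of Halpern-Leistner--Pomerleano recalled in the introduction applies.

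\emph{Step 1 (reduction to a quotient presentation).}  By the Alper--Hall--Rydh local structure theorem for Artin stacks with nice stabilizers and separated diagonal, $\sX$ admits a Nisnevich cover by quotient stacks of the form $[U/G]$ with $G$ embeddable and nice and $U$ a quasi-affine derived algebraic space of finite type over $\bC$. Since $F$ is a localizing invariant, the presheaf $F^G$ on $\sS^G$ satisfies Nisnevich descent, as recorded in the main text. It therefore suffices to prove $F^G(X) \simeq 0$ for every $X \in \sS^G$ of finite type.

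\emph{Step 2 (cdh dévissage to the smooth case).}  By \corref{cor:cdh}, $F^G$ satisfies cdh hyperdescent and is in particular nil-invariant, so we may assume $X$ is a classical algebraic space; we then induct on $\dim X$. Embedding $X$ $G$-equivariantly as a closed subspace of a smooth ambient $G$-scheme (using embeddability of $G$), equivariant resolution of singularities in characteristic zero produces a proper birational $G$-equivariant morphism $f : X' \to X$ with $X'$ smooth, which is an isomorphism over the smooth locus of $X$. Letting $Z \subset X$ denote the singular locus and $E := f^{-1}(Z)$, the abstract blow-up square yields a fibre sequence
\[ F^G(X) \to F^G(X') \oplus F^G(Z) \to F^G(E). \]
Since $\dim Z, \dim E < \dim X$, the inductive hypothesis gives $F^G(Z) \simeq F^G(E) \simeq 0$, so $F^G(X) \simeq F^G(X')$.

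\emph{Step 3 (smooth case).}  We are reduced to $F^G(X') \simeq 0$ for $X'$ smooth of finite type over $\bC$ with $G$ nice. Over $\bC$, every group of multiplicative type is automatically diagonalizable, so the identity component of any nice group is a split torus; consequently $G$ has diagonalizable identity component and $[X'/G]$ is a smooth global quotient stack to which the Halpern-Leistner--Pomerleano theorem from the introduction applies, giving the desired vanishing. The main obstacle in this plan is executing Step~2: one must realise the equivariant resolution and abstract blow-up square in $\sS^G$ compatibly with the $G$-action, which is possible in characteristic zero thanks to embeddability of $G$ and the functoriality of canonical resolution algorithms. The new ingredient \corref{cor:cdh} is precisely what powers the dévissage for a truncating invariant like $F$.
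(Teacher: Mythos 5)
Your overall strategy (local structure theorem $\Rightarrow$ Nisnevich reduction to quotients $[X/G]$, then cdh descent, nil-invariance and equivariant resolution to reduce to the smooth case, then Halpern-Leistner--Pomerleano) coincides with the paper's, and Steps 1--2 are essentially sound: the paper uses a $G$-equivariant cdh hypercover by smooth spaces where you run an induction on dimension with abstract blow-up squares, but both are legitimate uses of \corref{cor:cdh}. The genuine gap is in Step 3, which you dispose of in two sentences; in the paper this is where the remaining work lies. First, the local structure theorem does \emph{not} produce quotients by algebraic groups over $\bC$: it produces $[X/G]$ with $G$ a nice embeddable \emph{group scheme over an affine $\bC$-scheme $S$}, e.g.\ a non-constant family of diagonalizable groups. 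Your argument ``over $\bC$ every group of multiplicative type is diagonalizable, so the HLP theorem applies'' does not parse for such a $G$. The paper deals with this by choosing an embedding $G \subset \GL_{n,S}$ and rewriting $[X/G] \simeq [(X \fibprod_S^G \GL_{n,S})/\GL_{n,S}]$; linear reductivity of $G$ plus Matsushima's criterion guarantee that $X \fibprod^G \GL_{n,S}$ is again smooth and affine, and the quotient is now by $\GL_n$, which \emph{is} defined over $\bC$.

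Second, the input from \cite{HalpernLeistnerPomerleano} is not a statement about arbitrary smooth global quotient stacks: Thm.~2.17 there requires a semicomplete KN stratification, which is supplied by their Thm.~1.3 when the group is reductive and the space is \emph{affine}. After resolution your $X'$ is proper over $X$ and will essentially never be affine, so you cannot invoke the theorem as you state it. The paper restores affineness at exactly this point using the generalized Sumihiro theorem ($G$-equivariant scallop decompositions by (quasi-)affines, available because $G$ is nice) together with Nisnevich descent, and only then applies HLP. So the two missing ingredients are: (i) restore affineness after resolution via generalized Sumihiro and Nisnevich descent, and (ii) reduce from a nice group scheme over $S$ to $\GL_n$ over $\bC$ via $X \mapsto X \fibprod^G \GL_{n,S}$ and Matsushima. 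Contrary to your closing remark, the equivariant resolution in Step 2 is not the main obstacle; the paper gets it for free from \cite{AbramovichTemkinWlodarczyk} applied to the stack $[X/G]$.
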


  \begin{proof}
    By \cite[Thm.~2.12(ii)]{sixstack} (based on \cite[Thm.~1.9]{AlperHallHalpernLeistnerRydh}), $\sX$ is nicely scalloped; that is, it admits a scallop decomposition by quotient stacks $[X/G]$ with $G$ a nice embeddable group scheme over an affine $\bC$-scheme $S$, acting on a finite type quasi-affine derived scheme $X$ over $S$.
    By Nisnevich descent it will thus be enough to show that $F^G(X) := F([X/G]) \simeq 0$ with $X$ and $G$ as above.
    Repeating the same reasoning with \cite[Prop.~A.1.9]{BKRSMilnor}, we may moreover assume that $X$ is affine.

    Since $F$ is truncating, $F^G(-)$ satisfies cdh hyperdescent and nil-invariance by \corref{cor:cdh}.
    In particular, we may assume that $X$ is classical and reduced.
    Since $X$ admits a $G$-equivariant resolution of singularities (e.g. by \cite[Thm.~8.1.1]{AbramovichTemkinWlodarczyk} applied to $[X/G]$), there exists a $G$-equivariant cdh hypercover $\widetilde{X}_\bullet \to X$ where each $\widetilde{X}_n$ is smooth.
    By cdh hyperdescent again, we may thus assume that $X$ is smooth.
    Note that $X$ need no longer be affine, but since $G$ is nice we may apply generalized Sumihiro and Nisnevich descent to assume $X$ affine again.

    Thus suppose $X$ is smooth and affine.
    In case $G$ is defined over $\bC$ (e.g. $G=\GL_{n,S}$), the claim is a special case of \cite[Thm.~2.17]{HalpernLeistnerPomerleano}, where $[X/G]$ admits a ``semicomplete KN stratification'' by \cite[Thm.~1.3]{HalpernLeistnerPomerleano} because $G$ is reductive and $X$ is affine.

    Otherwise, choose an embedding $G \sub \GL_{n,S}$ and write
    $$[X/G] \simeq [(X \fibprod_S^G \GL_{n,S}) / \GL_{n,S}],$$
    where $X \fibprod_S^G \GL_{n,S} = [(X \fibprod_S \GL_{n,S})/G]$, with $G$ acting on $X \times \GL_{n,S}$ by $h \cdot (x, g) = (h \cdot x, g\cdot h^{-1})$ and $\GL_{n,S}$ acting on $X \times \GL_{n,S}$ by $h \cdot (x, g) = (x, h \cdot g)$ (this passes to $X \fibprod^G \GL_{n,S}$ since the actions commute).
    Since $G$ is linearly reductive, $G/\GL_{n,S}$ is affine by Matsushima, so $X \fibprod^G \GL_{n,S}$ is affine (and still smooth).
    Thus $$F^G(X) \simeq F^{\GL_n}(X \fibprod^G \GL_n) \simeq 0$$
    as desired.
  \end{proof}

\appendix
\section{The lattice property for \texorpdfstring{$\DCoh(X)$}{DCoh(X)}}

  \subsection{Dévissage for periodic cyclic cohomology}

    Given a localizing invariant $E$, we write $E^\BM(-) := E(\DCoh(-))$.
    For any closed immersion of (derived) algebraic spaces $i : Z \to X$ there is a canonical map
    \begin{equation}\label{eq:wildcat}
      E^\BM(Z) = E(\DCoh(Z)) \to E(\DCoh(X~\mrm{on}~Z))
    \end{equation}
    where $\DCoh(X~\mrm{on}~Z)$ is the kernel of the restriction functor $\DCoh(X) \to \DCoh(X\setminus Z)$.
    Since $E$ is localizing, the target is identified with
    \[
      E(\DCoh(X~\mrm{on}~Z))
      \simeq \Fib(E^\BM(X) \to E^\BM(X\setminus Z)).
    \]
    Thus \eqref{eq:wildcat} is invertible if and only if the sequence (which is canonically null-homotopic)
    \[
      E^\BM(Z) \to E^\BM(X) \to E^\BM(X\setminus Z)
    \]
    is exact.

    For algebraic K-theory, hence also for $\Ktop$, Quillen's dévissage theorem implies \eqref{eq:wildcat} is invertible.
    This is not the case for arbitrary localizing invariants (see \cite[Ex.~1.11]{Keller} for a counterexample in $E=\on{HH}$).

    \begin{thm}\label{thm:dev}
      Let $X$ be an algebraic space of finite type over $\bC$.
      For any closed immersion $i : Z \hook X$, the canonical map $\HP^\BM(Z) \to \HP(\DCoh(X~\mrm{on}~Z))$ is invertible.
    \end{thm}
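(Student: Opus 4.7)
The plan is to apply a stronger dévissage result of Preygel that identifies $\DCoh(X~\mrm{on}~Z)$ with a filtered colimit of coherent-complex categories on infinitesimal thickenings, and then to exploit Goodwillie's nil-invariance of periodic cyclic homology. Writing $i_n : Z_n \hookrightarrow X$ for the $n$-th infinitesimal neighborhood of $Z$ in $X$, Preygel's work should yield an equivalence of stable \inftyCats
\[
  \colim_n \DCoh(Z_n) \xrightarrow{\sim} \DCoh(X~\mrm{on}~Z)
\]
with transition maps the pushforwards along the nilpotent closed immersions $Z_n \hookrightarrow Z_{n+1}$. Conceptually this reflects the identification of $\DCoh(X~\mrm{on}~Z)$ with $\DCoh$ of the formal completion $\widehat{X}_Z$, which is itself a filtered colimit over the $\DCoh(Z_n)$.

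Applying $\HP$, which is a localizing invariant and commutes with filtered colimits of stable \inftyCats, then gives
\[
  \HP(\DCoh(X~\mrm{on}~Z)) \simeq \colim_n \HP^\BM(Z_n).
\]
To conclude, I would show that each pushforward $\HP^\BM(Z) \to \HP^\BM(Z_n)$ along a nilpotent thickening is an equivalence. Over $\bC$ this is an incarnation of Goodwillie's nil-invariance, deducible (for instance) from the Hodge-theoretic description of $\HP^\BM$ as periodic algebraic de Rham cohomology with Borel--Moore-type supports, which depends only on the underlying reduced structure. The colimit then collapses to $\HP^\BM(Z)$, yielding the desired equivalence and hence invertibility of the map from \eqref{eq:wildcat}.

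The main obstacle is promoting Goodwillie's nil-invariance from $\Perf$ to $\DCoh$: the classical theorem yields nil-invariance of $\HP$ of the structure ring, equivalently $\HP(\Perf(-))$, but $\DCoh$ and $\Perf$ disagree precisely on singular schemes, which is where the theorem has content. Bridging this gap --- either by leveraging Preygel's structural results more directly, or via a resolution-of-singularities argument that reduces to smooth $X$ where $\DCoh = \Perf$ --- is the technical heart of the proof; the rest is assembling the localizing-invariant formalism.
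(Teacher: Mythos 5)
Your proposal has two genuine gaps. The first is the claim that $\HP$ ``commutes with filtered colimits of stable \inftyCats'': it does not. While the identification $\colim_n \DCoh(Z_n) \simeq \DCoh(X~\mrm{on}~Z)$ (colimit along pushforwards to infinitesimal thickenings) is a correct and known statement, $\HP = \mrm{HH}^{tS^1}$ involves the Tate construction, which is built from a limit and destroys finitarity; $\HP$ is a localizing invariant but not a \emph{finitary} one, so $\HP(\colim_n \DCoh(Z_n))$ need not agree with $\colim_n \HP^\BM(Z_n)$. This failure of finitarity is precisely one of the reasons dévissage for $\HP$ is delicate, so the reduction to the infinitesimal neighborhoods does not go through as stated. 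The second gap is the one you flag yourself but do not close: the required nil-invariance is that the \emph{pushforward} $\HP(\DCoh(Z)) \to \HP(\DCoh(Z_n))$ is an equivalence, whereas Goodwillie's theorem concerns $\HP(\Perf(-))$ and pullback. The thickenings $Z_n$ are non-reduced, hence exactly where $\DCoh \neq \Perf$, and a resolution-of-singularities reduction is not available for $\DCoh$-type invariants without already having dévissage in hand. The paper's own remark makes the point explicitly: nil-invariance of $\HP^\BM$ is only known \emph{as a consequence} of Preygel's comparison theorem, so your ``technical heart'' is essentially equivalent in strength to the theorem you are trying to prove, and the proposal is circular at that step.

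For comparison, the paper's proof avoids both issues by a different mechanism: Preygel's theorem identifies $\HP^\BM(X)$ with periodic de Rham Borel--Moore chains $\Chom^{\BM,\dR}(X) \otimes_k k\laur$, functorially for proper pushforwards and quasi-smooth pullbacks, and then Kashiwara's lemma for $D$-modules supplies the localization triangle $\Chom^{\BM,\dR}(Z) \to \Chom^{\BM,\dR}(X) \to \Chom^{\BM,\dR}(X\setminus Z)$, whose exactness after tensoring with $k\laur$ is exactly the dévissage statement. No filtered colimit of categories and no separate nil-invariance input is needed.
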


    \begin{rem}
      When $X$ is smooth and $i : Z \hook X$ is a a quasi-smooth closed immersion (with $Z$ possibly derived), Brown and Walker recently gave a different proof of \thmref{thm:dev}.
      Indeed, using \propref{prop:ebmnis} below and the local structure of quasi-smooth closed immersions (see \cite[2.3.6]{blowups}), one reduces to the local calculation of \cite[Thm.~4.2(i)]{BrownWalker}.
      If we know that $\HP^\BM$ is insensitive to derived structures, this gives another proof of \thmref{thm:dev} for $X$ smooth (and $Z$ any closed subspace), because every closed subspace of $X$ admits some quasi-smooth derived structure locally on $X$.
      In fact, $\HP^\BM$ is indeed insensitive to derived structures if we admit \thmref{thm:prey} below (in view of the localization triangle \eqref{eq:trifling} for the closed immersion $X_\cl \to X$), but we do not know a direct proof that does not go through Preygel's comparison.
    \end{rem}

    The following is a consequence of dévissage (as in \cite[Cor.~3.11]{kstack}), but in fact holds more generally:

    \begin{prop}\label{prop:ebmnis}
      Let $E$ be a localizing invariant of $k$-linear stable \inftyCats (for a commutative ring $k$).
      Then $E^\BM(-)$ satisfies Nisnevich descent on qcqs derived algebraic spaces over $k$.
    \end{prop}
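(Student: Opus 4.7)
The plan is to verify that $E^\BM$ sends each elementary Nisnevich square
$$\begin{tikzcd}
U \fibprod_X V \ar{r}\ar{d} & V \ar{d}{p}\\
U \ar{r}{j} & X
\end{tikzcd}$$
to a cartesian square; together with $E^\BM(\initial)\simeq 0$ (from localizing) this yields Nisnevich descent. Writing $Z := X \setminus U$ with its reduced classical structure and $Z' := p^{-1}(Z)$, the Nisnevich hypothesis is that $p$ restricts to an isomorphism $Z'_\red \simeq Z_\red$. Since $E$ is localizing and $\DCoh$ admits Verdier-exact localization sequences $\DCoh(X\,\mrm{on}\,Z) \to \DCoh(X) \to \DCoh(U)$ (and similarly for $V$), the desired cartesianness reduces via a standard diagram chase to showing that the pullback $p^* : \DCoh(X\,\mrm{on}\,Z) \to \DCoh(V\,\mrm{on}\,Z')$ becomes an equivalence after applying $E$.

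I would prove the stronger statement that $p^*$ is already an equivalence of stable \inftyCats, thereby bypassing dévissage entirely and explaining why the conclusion holds for an arbitrary localizing invariant. The key input is topological invariance of the étale site: an étale morphism that is an isomorphism on reduced classical closed subspaces is automatically an isomorphism on every infinitesimal thickening, hence induces an equivalence of the formal completions $\widehat{V}_{Z'} \simeq \widehat{X}_Z$. A coherent (= pseudocoherent, bounded) complex on $X$ set-theoretically supported on $Z$ is determined by the compatible system of its restrictions to these infinitesimal thickenings, and the analogous statement holds for $V$ and $Z'$; since these systems are identified by $p$, we get the equivalence.

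The main obstacle is executing this formal-completion identification cleanly in the derived setting for pseudocoherent complexes with bounded cohomology. One route is to work Zariski-locally on affine charts $\Spec(A) \to X$, where $Z$ is cut out by a finitely generated ideal $I \sub \pi_0(A)$, and describe $\DCoh(X\,\mrm{on}\,Z)$ as the full subcategory of $\DCoh(A)$ on those objects killed by inverting some (equivalently every) element of $I$; this description is manifestly insensitive to replacing $A$ by any étale $A$-algebra inducing an isomorphism on $\Spec(A/I)_\red$, giving the desired equivalence. A second route is to invoke derived formal GAGA (Lurie or Gaitsgory--Rozenblyum) to identify both sides with $\DCoh$ of the common formal completion. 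Either argument produces the equivalence without any input from $K$-theoretic dévissage.
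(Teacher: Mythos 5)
Your route is genuinely different from the paper's. The paper never touches elementary Nisnevich squares or supported subcategories directly: it observes that $\DCoh(U) \simeq \Perf(U) \otimes_{\Perf(X)} \DCoh(X)$ for $U$ étale over $X$, views $\sC \mapsto E(\sC \otimes_{\Perf(X)} \DCoh(X))$ as a localizing invariant of $\Perf(X)$-linear stable \inftyCats, and then quotes the general Nisnevich descent theorem of Clausen--Mathew--Naumann--Noel for such invariants. That argument buys uniformity (no excision statement about $\DCoh$ with supports is ever needed) at the cost of the module-category formalism. Your Thomason--Trobaugh-style reduction to elementary squares plus localization sequences is a legitimate alternative strategy, and the excision statement you aim for --- that $p^* : \DCoh(X~\mrm{on}~Z) \to \DCoh(V~\mrm{on}~Z')$ is an equivalence of stable \inftyCats --- is in fact true.

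However, the justification you offer for that key equivalence has a genuine gap. First, the support characterization in your ``first route'' is wrong as stated: for $I \sub \pi_0(A)$ finitely generated, an object $M$ is supported on $V(I)$ if and only if $M[f^{-1}] \simeq 0$ for \emph{every} $f \in I$; being killed by inverting \emph{some} element of $I$ is a strictly different condition (take $I=(x,y)$ in $k[x,y]$ and $M=k[x,y]/(x)$), so ``some (equivalently every)'' is false. Second, and more seriously, even with the correct support condition the claim that the description is ``manifestly insensitive'' to replacing $A$ by an étale $A$-algebra inducing an isomorphism on $\Spec(\pi_0(A)/I)_\red$ is circular: the ambient category $\DCoh(A)$ certainly changes under such a replacement, and the assertion that the supported subcategories match up under $p^*$ \emph{is} the excision theorem you are trying to prove, not a consequence of how you have written it down. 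The honest proof here either runs through the equivalence $\Gamma_Z\Qcoh(X) \simeq \Gamma_{Z'}\Qcoh(V)$ (using the splitting of $V \fibprod_X V$ over $Z'$ to check that $p_*$ and $p^*$ are mutually inverse on supported objects, plus étale-locality of coherence to see that $p_*$ preserves $\DCoh$ on objects with support finite over $X$), or through your ``second route''; but note that the naive formal GAGA statement is also false --- coherent complexes on the formal completion are not the same as coherent complexes supported on $Z$ (e.g.\ $\sO_{\widehat{X}_Z}$ is not torsion), so one must use the torsion/ind-coherent version $\IndCoh(\widehat{X}_Z) \simeq \IndCoh_Z(X)$ of Gaitsgory--Rozenblyum and then identify the coherent objects on both sides. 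Finally, your reduction of Nisnevich descent to elementary squares for qcqs derived \emph{algebraic spaces} (rather than schemes) also needs a citation that the Nisnevich topology there is generated by a complete regular cd-structure. None of these points is fatal to the strategy, but as written the central step is asserted rather than proved.
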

    \begin{proof}
      Let $X$ be a qcqs derived algebraic space.
      For every étale $U$ over $X$, there is a canonical equivalence
      \[ \Perf(U) \otimes_{\Perf(X)} \DCoh(X) \simeq \DCoh(U) \]
      by \cite[Chap.~4, Rem.~3.3.3]{GaitsgoryRozenblyum}.
      Consider then the localizing invariant $E'$ of $\Perf(X)$-linear stable \inftyCats\footnote{%
        See \cite[App.~A]{ClausenMathewNaumannNoel} for this notion.
      } given by
      \[E'(\sC) := E(\sC \otimes_{\Perf(X)} \DCoh(X)),\]
      so that $E'(\Perf(-)) \simeq E^\BM(-)$ on the small étale site of $X$.
      By \cite[Prop.~A.15]{ClausenMathewNaumannNoel}, $E'(\Perf(-))$ satisfies Nisnevich descent, hence so does $E^\BM(-)$.
    \end{proof}

    The following is \cite[Thm.~1.1.2, Thm.~6.3.2]{Preygel}:

    \begin{thm}[Preygel]\label{thm:prey}
      Let $X$ be a quasi-separated derived algebraic space locally of finite type over $\bC$.
      Then there is a canonical isomorphism
      \[
        \HP^\BM(X)
        \to \Chom^{\BM,\dR}(X) \otimes_k k\laur
      \]
      where $u$ is in homological degree $-2$.
      Moreover, it is covariantly functorial with respect to proper push-forwards and contravariantly functorial with respect to quasi-smooth pull-backs.
    \end{thm}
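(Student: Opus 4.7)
The plan is to reduce the global statement to a local calculation on smooth ambient spaces via a closed embedding, apply the classical HKR theorem there, and then patch the resulting comparison globally on $X$. The main input on the noncommutative side is the identification of $\DCoh(X)$, for a closed immersion $X \hook M$ into a smooth $M$, with coherent complexes on $M$ supported set-theoretically on $X$, combined with the localization fibre sequence for a localizing invariant.

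Since the claim is Zariski local on $X$ by \propref{prop:ebmnis}, I would first choose a closed immersion $i : X \hook M$ into a smooth $\bC$-scheme $M$ of finite type. Via $i_* : \DCoh(X) \simeq \DCoh(M)_X$, the localization fibre sequence $\DCoh(M)_X \to \DCoh(M) \to \DCoh(M \setminus X)$ gives
\[
  \HP^\BM(X) \simeq \Fib\bigl(\HP(\Perf(M)) \to \HP(\Perf(M \setminus X))\bigr),
\]
using that $M$ and $M \setminus X$ are smooth so that $\DCoh = \Perf$ there. Classical HKR for smooth schemes identifies the right-hand terms with $\Ccoh_\dR(M) \otimes_\bC \bC\laur$ and $\Ccoh_\dR(M \setminus X) \otimes_\bC \bC\laur$, so the fibre is the $2$-periodic algebraic de Rham cohomology of $M$ with supports in $X$. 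Poincar\'e--Lefschetz duality on the smooth ambient $M$ identifies this with $\Chom^{\BM,\dR}(X) \otimes_\bC \bC\laur$, the shift by $2 \dim M$ being absorbed by $2$-periodicity.

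To globalize the comparison, I would check its independence from the auxiliary smooth embedding by comparing two embeddings $X \hook M_1, M_2$ through $X \hook M_1 \times M_2$ and invoking K\"unneth on both sides; Nisnevich descent---\propref{prop:ebmnis} on the noncommutative side, standard on the de Rham side---then glues over $X$ and removes the hypothesis of a global embedding. The functoriality assertions follow from functoriality of the local comparison: proper push-forward is inherited from $i_*$ on $\DCoh$ paired with proper push-forward on de Rham cohomology with supports, and quasi-smooth pull-back from base change for $\DCoh$ with supports paired with Gauss--Manin on the de Rham side.

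The hard part is matching the ``noncommutative'' $u$-periodic structure on the left with the de Rham differential on the right: one must show that under HKR, Connes' $B$-operator on the Hochschild complex of $\DCoh(M)_X$ corresponds to the algebraic de Rham differential on cohomology of $M$ with supports in $X$. A clean route is through the derived loop space or HKR filtration for $\IndCoh$, so that both the formal variable $u$ (in homological degree $-2$) and the de Rham differential arise simultaneously from the $S^1$-action on the loop space. Once this identification of mixed complexes is established, the passage to $\HP$ and verification of the functorialities are formal.
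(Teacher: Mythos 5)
The paper does not prove this statement; it is quoted verbatim from Preygel (\cite[Thm.~1.1.2, Thm.~6.3.2]{Preygel}), so your proposal must stand on its own, and it has a genuine gap at its foundation. The step ``via $i_* : \DCoh(X) \simeq \DCoh(M)_X$'' is not an identification of stable $\infty$-categories: for a closed immersion the push-forward $i_* : \DCoh(X) \to \DCoh(M~\mrm{on}~X)$ is neither fully faithful nor essentially surjective at the derived level (e.g.\ $i^!i_* \neq \id$, and $\cO_M/\cI^n$ is not in the essential image). The assertion that it nevertheless induces an equivalence after applying $\HP$ is precisely the dévissage property, i.e.\ \thmref{thm:dev} of this paper --- a statement that is \emph{false} for Hochschild homology (\cite[Ex.~1.11]{Keller}) and that the paper \emph{deduces from} \thmref{thm:prey}, not the other way around. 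So your argument is circular in the logical structure of the paper, and the only independent substitutes currently available (Brown--Walker) cover only quasi-smooth immersions into smooth ambients and themselves need the insensitivity of $\HP^\BM$ to derived structures, which again is only known via Preygel. Note also that your reduction silently discards the derived structure of $X$ (a closed embedding into smooth $M$ only sees the underlying classical space through the support condition), which is another instance of the same unproved dévissage/nil-insensitivity input.

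Granting dévissage, the rest of your outline (localization for $\Perf$ on the smooth ambient, HKR, Poincaré--Lefschetz duality to convert cohomology with supports into Borel--Moore homology, independence of the embedding via $M_1 \times M_2$) is reasonable and is indeed how one would organize such a proof. But the part you flag as ``the hard part'' --- matching Connes' $B$-operator, hence the variable $u$, with the de Rham differential via the $S^1$-action on the derived loop space --- is not a detail to be deferred: together with dévissage it \emph{is} the content of Preygel's paper (whose title is precisely about ind-coherent sheaves on loop spaces and connections). As written, the proposal assumes the two genuinely difficult inputs and supplies only the formal glue around them.
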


    This immediately implies \thmref{thm:dev}.
    To see this we recall the definition of the complex of de Rham Borel--Moore chains on $X$, for $X$ locally of finite type over a field $k$ of characteristic zero:
    \[
      \Chom^{\BM,\dR}(X) := \RGamma(X, \omega^\dR_X)
    \]
    where $\omega^\dR_X$ denotes the dualizing complex of $X$ in the \inftyCat of $D$-modules.
    Following \cite{GaitsgoryRozenblyum}, the latter is by definition the \inftyCat $\IndCoh(X_\dR)$ of ind-coherent sheaves on the \emph{de Rham prestack} $X_\dR$ of $X$ (see \cite[1.2.1]{Preygel}, \cite[\S 1]{GaitsgoryRozenblyum}).
    The de Rham dualizing complex $\omega^\dR_X$ is just the dualizing complex of $X_\dR$ in $\IndCoh(X_\dR)$.
    Thus more explicitly,
    \[
      \Chom^{\BM,\dR}(X) = \RGamma(X_{\dR}, \omega^{\DCoh}_{X_\dR}).
    \]

    Let $i : Z \to X$ be a closed immersion and $j$ the inclusion of the complement $X\setminus Z$.
    Kashiwara's lemma \cite[Prop.~2.5.6]{GaitsgoryRozenblyum} implies that we have an exact triangle of functors
    \[ i_*i^! \to \id \to j_*j^! \]
    where the functoriality is at the level of $D$-modules.
    This gives rise to the localization exact triangle
    \begin{equation}\label{eq:trifling}
      \Chom^{\BM,\dR}(Z)
      \to \Chom^{\BM,\dR}(X)
      \to \Chom^{\BM,\dR}(X\setminus Z).
    \end{equation}

    \begin{proof}[Proof of \thmref{thm:dev}]
      By \thmref{thm:prey}, the sequence
      \[
        \HP^\BM(Z)
        \to \HP^\BM(X)
        \to \HP^\BM(X\setminus Z)
      \]
      is identified with $\textrm{\eqref{eq:trifling}} \otimes k\laur$.
    \end{proof}

  \subsection{Proof of \thmref{thm:coh}}

    Let $X$ be a quasi-separated derived algebraic space of finite type over $\bC$.
    Let $F(-)$ denote the fibre of $\Ktop(-) \otimes \bC \to \HP(-)$, regarded as a localizing invariant of $\bC$-linear stable \inftyCats.
    We will show that $F^\BM(X) := F(\DCoh(X)) \simeq 0$.

    By \propref{prop:ebmnis} the claim is Nisnevich-local on $X$, so we may assume that $X$ is an affine scheme.
    In particular, there exists a closed immersion $X \hook Y$ where $Y$ is a smooth affine $\bC$-scheme.
    Since both $\K^{\mrm{top,BM}}$ and $\HP^\BM$ satisfy dévissage (the former follows from the case of $\K^\BM=\G$ and the latter is \thmref{thm:dev}), so does $F^\BM$.
    That is, we have an exact triangle
    \[
      F^\BM(X) \to F^\BM(Y) \to F^\BM(Y\setminus X).
    \]
    Since $Y$ and $Y\setminus X$ are regular, we have $F^\BM(Y) \simeq F(\Perf(Y)) \simeq 0$ and $F^\BM(Y\setminus X) \simeq 0$ by \cite[Prop.~4.32]{Blanc}.
    The claim follows.



\bibliographystyle{halphanum}

Institute of Mathematics, Academia Sinica, 10617 Taipei, Taiwan

\end{document}